\numberwithin{equation}{section}
\newtheorem{theorem}{Theorem}[section]
\newtheorem{defn}[theorem]{Definition}
\newtheorem{corollary}[theorem]{Corollary}
\newtheorem{lemma}[theorem]{Lemma}
\newtheorem{prop}[theorem]{Proposition}
\newtheorem{question}[theorem]{Question}
\newtheorem{remark}[theorem]{Remark}
\newcommand{\frakm}{\mathfrak{m}}
\newcommand{\frakn}{\mathfrak{n}}
\newcommand{\CC}{\mathbb{C}}
\newcommand{\ZZ}{\mathbb{Z}}
\newcommand{\CJ}{\mathcal{J}}
\newcommand{\kk}{\Bbbk}
\newcommand{\Fe}{F^e_*}
\DeclareMathOperator{\Hom}{Hom}
\DeclareMathOperator{\Sup}{sup}
\DeclareMathOperator{\Min}{min}
\DeclareMathOperator{\Lct}{lct}
\DeclareMathOperator{\Lce}{lce}
\DeclareMathOperator{\Fpt}{fpt}
\DeclareMathOperator{\Ht}{ht}
\DeclareMathOperator{\Char}{char}
\title{Singularities of Generic Linkage via Frobenius Powers}
\author{Jiamin Li}
\address{Department of Mathematics, Statistics, and Computer Science, University of Illinois at Chicago,
Chicago, IL 60607}
\email{jli283@uic.edu}
\subjclass[2020]{}
\begin{document}

\maketitle

\begin{abstract}
	Let $I$ be an equidimensional ideal of a ring polynomial $R$ over $\CC$ and let $J$ be its generic linkage.  We prove that there is a uniform bound of the difference between the F-pure thresholds of $I_p$ and $J_p$ via the generalized Frobenius powers of ideals. This provides evidence that the F-pure threshold of an equidimensional ideal $I$ is less than that of its generic linkage. As a corollary we recover a result on log canonical thresholds of generic linkage by Niu.
\end{abstract}

\section{Introduction}
In this paper we will work with polynomial rings of the form $R=\kk[x_1, \cdots, x_n]$ over a perfect field $\kk$. First we will recall the definition of generic linkage of an ideal. Let $I = (f_1,\cdots,f_r)$ be an equidimensional ideal of $R$ with $\Ht(I) = c$. Define the polynomial ring $S=R[u_{ij}]_{c \times r}$ and define the ideal $L$ generated by the elements that form a regular sequence $L := (g_1,\cdots,g_c)$, where each $g_i$ is defined by
$$g_i = f_1 u_{i1} + \cdots + f_r u_{ir}.$$
Moreover we define the colon ideal $J := (L:I)$. Then $J$ is called the generic linkage of $I$ in $S$.

The study of generic linkage has a long history, some of the pioneer work are \cite{HunekeUlrich87}, \cite{HunekeUlrich88}, \cite{ChardinUlrich}. As pointed out in \cite{MaPageRG+} and \cite{Niu}, there is still much to explore in the singularity theory of generic linkage. Recently the singularities of linkage has been studied in \cite{Niu}, where the author showed that when passing to the generic linkage, the singularities of the pairs would not get worse. As mentioned in the introduction of \cite{Niu}, the singularity of generic linkage has been studied by Chardin and Ulrich, see \cite{ChardinUlrich}. There they proved that under the assumption that $I$ defines a complete intersection variety $X$ with rational (resp. F-rational) singularity, then its generic linkage has rational (resp. F-rational) singularity as well. A similar result in positive characteristic in regard to F-purity has been proved in \cite{MaPageRG+} as well with the assumption that the definig ideal reduces to a complete intersection. We recall their results below.

\begin{theorem}\label{main_thm_char_p}\cite{MaPageRG+}*{Corollary 4.4}
	Let $J$ be a generic linkage of an equidimensional ideal $I$ in $R=\kk[x_1,...,x_n]$ of height $c$ where $\kk$ is a perfect field of positive characteristic. Then 
	\begin{enumerate}
		\item If $I$ has a reduction generated by $r$ elements, then $\frac{c}{r}\Fpt_R(I) \leq \Fpt_S(J)$.
		\item If $I$ has a reduction generated by $c$ elements (e.g. $I$ is a complete intersection), then $\Fpt_R(I) \leq \Fpt_S(J)$. In particular, if $(R,I^c)$ is F-pure, then $(S,J^c)$ is F-pure as well.
		\item $\frac{c}{n}\Fpt_R(I) \leq \Fpt_S(J)$.
	\end{enumerate}
\end{theorem}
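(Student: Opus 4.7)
The plan is first to reduce all three parts to lower bounds for $\Fpt_S(L)$. Since $L \subseteq J$ by the definition of the colon ideal $J = L : I$, any element of $L^{\lceil t(p^e-1)\rceil}$ witnessing $F$-purity of $(S, L^t)$ at level $e$ automatically lies in $J^{\lceil t(p^e-1)\rceil}$ and witnesses $F$-purity of $(S, J^t)$. Thus $\Fpt_S(L) \leq \Fpt_S(J)$, and it suffices to prove the three analogous lower bounds with $J$ replaced by $L$.

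For the reduction steps, I would note that replacing $I$ by any reduction does not change $\Fpt_R(I)$, by the integral-closure invariance of the F-pure threshold. Hence for (1) one may assume $I$ itself is generated by $r$ elements; for (3) one applies (1) to an $n$-generated reduction of $I$, using that every ideal in $R = \kk[x_1, \ldots, x_n]$ admits such a reduction via generic linear combinations of any given generating set; and part (2) is the special case $r = c$, where $(u_{ij})$ is a square generic matrix.

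The core estimate $\Fpt_S(L) \geq \frac{c}{r}\Fpt_R(I)$ comes from a Fedder-type characterization together with the generic structure of the $u_{ij}$. One writes $\Fpt_S(L) = \lim_{e \to \infty} \nu_L(e)/p^e$, where $\nu_L(e)$ is the largest integer $N$ such that $L^N$ is not contained in a prescribed Frobenius-splitting ideal of $S$. Expanding $g_1^{a_1} \cdots g_c^{a_c}$ (with $a_1 + \cdots + a_c = N$) by the multinomial theorem produces a sum $\sum_\beta u^\beta f^{\alpha(\beta)}$ with pairwise distinct $u$-monomials. Then a witness $f^\alpha$ realizing $\nu_I(e) \approx \Fpt_R(I)(p^e - 1)$ in $R$ can be lifted, through this expansion of total degree $\approx (c/r)|\alpha|$ in the $g_i$, to a witness for $L$ of the correct order in $S$.

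The main obstacle will be controlling the multi-index bookkeeping in this last step: one must verify that for any $F$-purity witness $f^\alpha$ of order $\lceil t(p^e-1)\rceil$ in $R$, an appropriate product $\prod_i g_i^{a_i}$ of total degree $\lceil (c/r) t(p^e-1)\rceil$ yields a surviving term $u^\beta f^\alpha$ modulo the Frobenius-splitting ideal in $S$. This requires both the flat base change $R \hookrightarrow S$ to transport the Frobenius-splitting data and a combinatorial argument that the ratio $c/r$ is actually attained by a valid choice of multi-index $(a_1, \ldots, a_c)$.
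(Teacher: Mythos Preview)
The paper does not contain a proof of this statement. Theorem~\ref{main_thm_char_p} is quoted verbatim from \cite[Corollary~4.4]{MaPageRG+} as background and motivation for Question~\ref{ques}; no argument for it appears anywhere in the paper. So there is nothing to compare your proposal against.

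That said, your sketch is a faithful outline of the approach in \cite{MaPageRG+}: pass from $J$ to $L$ via $L\subseteq J$, replace $I$ by a reduction (using that $\Fpt$ depends only on the integral closure), and then use a Fedder-type criterion together with the multinomial expansion of $\prod g_i^{a_i}$ and the freeness of the $u$-monomials to lift an $F$-purity witness for $I$ to one for $L$ of total degree roughly $c/r$ times the original. One small caveat: the claim in (3) that every ideal of $R$ admits a reduction generated by $n$ elements uses that $\kk$ is infinite (to take generic linear combinations); you should make that hypothesis explicit or indicate how to handle finite $\kk$ by base change.
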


It is natural to ask the following question:
\begin{question}\label{ques}
	Without assuming the ideal $I$ to have a complete intersection reduction, do we still have $\Fpt_R(I) \leq \Fpt_S(J)$?
\end{question}

We believe that the answer to this question is positive and this note provides evidence for this inference. To be more precise, our main result implies that there is a uniform bound of the difference between $\Fpt_R(I_p)$ and $\Fpt_S(J_p)$ where $I_p$ and $J_p$ are reduction modulo p from the corresponding ideals in zero characteristic for $p$ large enough. A key intermediate result in positive characteristic is the following.
\begin{theorem}[Proposition \ref{lce_linkage}]\label{main_prop}
	Let $I$ be an equidimensional ideal with height $c$ in $\kk[x_1,...,x_n]$ where $\kk$ is perfect of positive characteristic and $J$ its generic linkage. Then 
	$$\Lce_R(I^c) = \Lce_S(L^c) \leq \Lce_S(J^c).$$
\end{theorem}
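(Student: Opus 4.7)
The plan is to split the statement into two independent pieces: the equality $\Lce_R(I^c) = \Lce_S(L^c)$ and the inequality $\Lce_S(L^c) \leq \Lce_S(J^c)$, since they rely on quite different ingredients.

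For the inequality $\Lce_S(L^c) \leq \Lce_S(J^c)$, I would argue directly from the definition of generic linkage. The colon ideal construction $J = (L:I)$ tautologically gives $L \subseteq J$, and therefore for any exponent $t$ the generalized Frobenius power $L^{[t]}$ is contained in $J^{[t]}$. Since $\Lce$ of $(\,\cdot\,)^c$ is a supremum of rationals for which a Frobenius-power-based $F$-purity / test-ideal condition is satisfied, this containment transfers with the expected direction to yield $\Lce_S(L^c) \leq \Lce_S(J^c)$. This half should be essentially a formality once the definition of $\Lce$ is unpacked.

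The equality $\Lce_R(I^c) = \Lce_S(L^c)$ is the main content. The strategy is to exploit the fact that $L$ is a \emph{generic} complete intersection inside $I$. Writing $g_i = \sum_j u_{ij} f_j$, any $c$-fold product $g_{i_1} \cdots g_{i_c}$ expands as an $S$-linear combination of the degree-$c$ monomials in the $f_j$'s, with polynomials in the $u_{ij}$'s as coefficients, giving $L^c \subseteq I^c \cdot S$; conversely, by forming suitable $u$-polynomial combinations of the $g_i$'s, one can recover each $f_{j_1} \cdots f_{j_c}$ modulo a generic specialization of the $u_{ij}$'s. I would then lift this back-and-forth to the level of generalized Frobenius powers, combining two observations: (i) $\Lce_S(I^c \cdot S) = \Lce_R(I^c)$, because passing from $R$ to the polynomial extension $S = R[u_{ij}]$ adjoins free variables and leaves Frobenius-based invariants of $I^c$ unchanged; (ii) $L^c$ and $I^c \cdot S$ have the same $\Lce$ despite being different ideals, via the generic reconstruction described above.

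The main obstacle is establishing (ii). The direction $\Lce_S(L^c) \leq \Lce_S(I^c S) = \Lce_R(I^c)$ is straightforward from $L^c \subseteq I^c S$ plus flat base change. The reverse inequality is the delicate one: here I would use a genericity argument, specializing the $u_{ij}$'s to values in an algebraic closure where the resulting linear combinations of the $f_j$'s capture enough of $I$ to detect non-$F$-purity of $(R,I^c)$ at the same threshold, and then invoke semicontinuity of $\Lce$ under such specialization, or, failing that, work directly with the combinatorial description of generalized Frobenius powers to transfer test elements from $S/L^{[t]}$ back to $R/I^{[t]}$. Once (ii) is in place, the equality drops out by combining it with (i), and the chain is closed.
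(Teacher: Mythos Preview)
Your scaffolding matches the paper: the inequality is immediate from $L\subseteq J$, and for the equality one has $\Lce_S(L^c)\le \Lce_S(I^cS)=\Lce_R(I^c)$ by containment and flat base change, leaving only the reverse inequality as the real content. The gap is in your plan for that reverse inequality. Route (a) via specialization does not close: sending $u_{ij}\mapsto\beta_{ij}$ produces an ideal $L_\beta\subseteq I$ in $R$, so at best $\Lce_R(L_\beta^c)\le\Lce_R(I^c)$, and any semicontinuity statement for $\Lce$ (which is not a standard tool in any case) would compare $\Lce_S(L^c)$ to $\Lce_R(L_\beta^c)$, not to $\Lce_R(I^c)$. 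No specialization of the $u$'s makes $L_\beta$ recover all of $I$ unless $I$ is already a complete intersection, so this route stalls.

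The paper executes your route (b), but with a specific mechanism your sketch does not yet contain. Assuming for contradiction that $(L^c)^{[t-\epsilon]}\subseteq\frakm$ for some maximal ideal $\frakm\subset S$ while $(I^c)^{[t-\epsilon]}=R$, one writes the relevant Frobenius power via the base-$p$ digits $b_i\le p-1$ and expands each factor $g_j^{b_ip^i}$ by the multinomial theorem. Because $b_i<p$, every multinomial coefficient is a unit mod $p$, so an arbitrary generator $f=f_1^{v_1}\cdots f_r^{v_r}$ of $(I^c)^{[p^e-p^{k'}a-1]}$ genuinely occurs, tagged by a distinguished $u$-monomial $\prod u_{ij}^{s_{ij}}$, inside the expansion of some $g\in (L^c)^{[p^e-p^{k'}a-1]}\subseteq\frakm^{[p^e]}$. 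One then applies the $R$-linear projection $\pi:S\to S$ onto that single $u$-monomial, obtaining $\pi(g)=C\cdot(\prod u_{ij}^{s_{ij}})f\in\frakm^{[p^e]}$; since each exponent satisfies $s_{ij}\le p^e-1$, the $u$-monomial itself is not in $\frakm^{[p^e]}$, which forces $f\in\frakm^{[p^e]}\cap R=\frakn^{[p^e]}$ and contradicts $(I^c)^{[t-\epsilon]}=R$. This ``isolate one $u$-monomial and peel it off'' step, together with the nonvanishing of the multinomial coefficients, is the idea your proposal is missing.
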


Using the above result we can deduce: 

\begin{theorem}[Main Theorem]\label{main_thm}
Let $J$ be the generic linkage of an equidimensional ideal $I$ in $R=\CC[x_1,...,x_n]$. There exists a prime $P$ such that for any $p > P$, there exists a constant $C_1$ independent of $p$ such that
\begin{align}
	\Fpt_S(L_p) \leq \Fpt_R(I_p) \leq \Fpt_S(L_p) + \dfrac{C_1}{p} \leq \Fpt_S(J_p) + \dfrac{C_1}{p}.
\end{align}

\end{theorem}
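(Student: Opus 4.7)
The plan is to split the chain of inequalities into two easy outer pieces and one substantive middle piece, with Theorem~\ref{main_prop} providing the crux of the middle step.

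\textbf{Outer inequalities.} Since each generator $g_i = \sum_j f_j u_{ij}$ of $L$ lies in $I_p S$, we have $L_p \subseteq I_p S$, so $\Fpt_S(L_p) \leq \Fpt_S(I_p S) = \Fpt_R(I_p)$; the last equality uses the standard fact that adjoining polynomial variables preserves the F-pure threshold. The inequality $\Fpt_S(L_p) \leq \Fpt_S(J_p)$ follows tautologically from $L \subseteq J = (L:I)$, since $\Fpt$ is monotone under ideal containment. Both of these steps work uniformly in $p$ and contribute no dependence on $p$.

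\textbf{Middle inequality.} For $\Fpt_R(I_p) \leq \Fpt_S(L_p) + C_1/p$, I apply Theorem~\ref{main_prop} to obtain the exact equality $\Lce_R(I_p^c) = \Lce_S(L_p^c)$, and then invoke the quantitative comparison between the generalized-Frobenius-power invariant $\Lce$ and the ordinary F-pure threshold. Concretely, for any ideal generated by at most $N$ elements, one expects an estimate of the form $|c \cdot \Fpt(\mathfrak{a}) - \Lce(\mathfrak{a}^c)| \leq C(N)/p$, where the constant depends only on the generator count. Applied to $I_p$ (with $r$ generators) and to $L_p$ (with exactly $c$ generators), and combined with the $\Lce$-equality from Theorem~\ref{main_prop}, the triangle inequality yields $|\Fpt_R(I_p) - \Fpt_S(L_p)| \leq C_1/p$, where $C_1$ depends only on $r$ and $c$ (and possibly $n$), not on $p$.

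\textbf{Uniformity across $p$ and reduction mod $p$.} I choose $P$ large enough that for every $p > P$, the reductions $I_p$, $L_p$, and $J_p$ preserve height $c$, equidimensionality, and the generic-linkage relation $J_p = (L_p : I_p)$; standard spreading-out arguments make this possible for all sufficiently large $p$. The characteristic-$p$ analysis above then applies to each such $p$ with a single constant $C_1$.

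\textbf{Main obstacle.} The technical heart is making the $\Lce$-to-$\Fpt$ comparison quantitatively uniform in $p$: the constant $C_1$ must depend only on the fixed data $(r,c,n)$ inherited from characteristic $0$, not on $p$ itself. This requires the generalized-Frobenius-powers machinery to furnish $1/p$ error bounds whose coefficients are controlled by generator counts rather than by $p$, and this is where most of the care in the proof will go.
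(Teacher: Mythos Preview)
Your outline matches the paper's approach almost exactly: the outer inequalities come from $L\subseteq I S$ and $L\subseteq J$, and the middle one is obtained by passing to $c$-th powers via Lemma~\ref{fpt=cfpt}, using Proposition~\ref{lce_linkage} for $\Lce_R(I_p^c)=\Lce_S(L_p^c)$, and sandwiching with the $\Lce$--$\Fpt$ comparison.

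Two points deserve sharpening. First, the estimate you write as $|c\cdot\Fpt(\mathfrak{a}) - \Lce(\mathfrak{a}^c)|\le C(N)/p$ is misstated; what you actually need (and what the paper uses, via Theorem~\ref{fpt<lce+}) is the one-sided bound $\Lce(\mathfrak{b})\le\Fpt(\mathfrak{b})\le \Lce(\mathfrak{b})+\tfrac{N-1}{p-1}$ applied to $\mathfrak{b}=I_p^c$ and $\mathfrak{b}=L_p^c$, then multiplied by $c$. Second, that bound from Theorem~\ref{fpt<lce+}(2) carries the hypothesis $\Lce(\mathfrak{b})\neq 1$, and the paper disposes of the boundary case $\Lce(I_p^c)=1$ by invoking Lemma~\ref{lct<=1} (ultimately Skoda, Theorem~\ref{skoda}) to force $\Fpt(I_p^c)\le 1$, whence $\Fpt=\Lce=1$ and the inequality is trivial. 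You should flag this Skoda input explicitly rather than folding it into ``one expects an estimate.'' Aside from these refinements, your plan is the paper's plan; note also that the paper routes through $\Lct_R(I)$ and Theorem~\ref{diff_fpt_lct} at the start, but that detour is not actually needed for Theorem~\ref{main_thm} itself (it is used only for Corollary~\ref{main_thm_niu}), so your more direct version is fine.
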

Though the above theorem is weaker than the positive answer of Question \ref{ques}, as a corollary we can recover the result on log canonical thresholds of generic linkage in \cite{Niu}. Note that in \cite{Niu}, the author also proved that $\Lct_R(I) = \Lct_S(L)$ in order to prove $\Lct_R(I) \leq \Lct_S(J)$.
\begin{corollary}\label{main_thm_niu}
	Let $J$ be the generic linkage of an equidimensional ideal $I$ in $R=\CC[x_1,...,x_n]$. Then we have
	$$\Lct_R(I) = \Lct_S(L) \leq \Lct_S(J).$$
	Therefore, if the pair $(R, I^c)$ is log canonical, then $(S, J^c)$ is log canonical as well.
\end{corollary}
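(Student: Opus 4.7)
The plan is to deduce the characteristic-zero statement from Theorem \ref{main_thm} using the standard bridge between F-pure thresholds and log canonical thresholds under reduction modulo $p$. Recall that for an ideal $\mathfrak{a} \subset \CC[x_1,\ldots,x_n]$ one has $\Fpt(\mathfrak{a}_p) \leq \Lct(\mathfrak{a})$ for every sufficiently large prime $p$, together with
$$\Lct(\mathfrak{a}) \;=\; \limsup_{p \to \infty} \Fpt(\mathfrak{a}_p),$$
by work of Hara--Yoshida and Musta\c{t}\u{a}--Takagi--Watanabe. This comparison is available simultaneously for $I$, $L$ and $J$ once one fixes a common model over a finitely generated $\ZZ$-subalgebra of $\CC$ that spreads out the coefficients of $I$, the auxiliary indeterminates $u_{ij}$, and the generators of $L$.

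To obtain the equality $\Lct_R(I) = \Lct_S(L)$, I apply $\limsup_{p \to \infty}$ to the chain
$$ \Fpt_S(L_p) \;\leq\; \Fpt_R(I_p) \;\leq\; \Fpt_S(L_p) + \frac{C_1}{p} $$
supplied by Theorem \ref{main_thm} for all $p > P$. Since $C_1/p \to 0$, the limsup of the right-hand side equals $\limsup \Fpt_S(L_p) = \Lct_S(L)$, the limsup of the left-hand side also equals $\Lct_S(L)$, while the middle limsup equals $\Lct_R(I)$. A two-sided squeeze forces $\Lct_R(I) = \Lct_S(L)$.

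For the inequality $\Lct_S(L) \leq \Lct_S(J)$, the final two entries of the chain in Theorem \ref{main_thm} yield $\Fpt_S(L_p) \leq \Fpt_S(J_p)$ for every $p > P$ (cancel the $C_1/p$). Taking $\limsup_{p \to \infty}$ of both sides and invoking the Hara--Yoshida principle on each side gives $\Lct_S(L) \leq \Lct_S(J)$. Combined with the previously established equality, this establishes $\Lct_R(I) = \Lct_S(L) \leq \Lct_S(J)$. The log canonical statement then follows immediately from the definition: $(R, I^c)$ log canonical means $\Lct_R(I) \geq c$, so $\Lct_S(J) \geq c$ and hence $(S, J^c)$ is log canonical as well.

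The only (mild) subtlety is the model-theoretic alignment needed to guarantee that the primes appearing in Theorem \ref{main_thm} coincide with those entering the Hara--Yoshida limit; this is a routine spreading-out argument and I do not expect it to be a serious obstacle. The substantive content of the corollary is entirely carried by Theorem \ref{main_thm}, which itself rests on Theorem \ref{main_prop} and the uniform bound $C_1/p$ between the F-pure thresholds of $I_p$ and $L_p$.
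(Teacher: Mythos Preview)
Your proof is correct and follows essentially the same route as the paper: take the limit (the paper writes $\lim$, you write $\limsup$, but the limit exists by Musta\c{t}\u{a}--Zhang so there is no difference) across the chain of inequalities in Theorem~\ref{main_thm}, use the standard $\Fpt \to \Lct$ comparison under reduction mod $p$, and read off $\Lct_R(I)=\Lct_S(L)\leq \Lct_S(J)$. Your write-up is in fact a bit more careful than the paper's, which cites only Theorem~\ref{fpt<lct} but tacitly also needs Theorem~\ref{diff_fpt_lct} to identify the limit with the log canonical threshold.
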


Our tool here is the generalized Frobenius powers ideals, which was introduced in \cite{HernandezTexeiraWitt}. To the best of our knowledge, this note can be considered as the first application in singularity theory of the generalized Frobenius powers of ideals.

\section{Preliminaries on F-singularity in positive characteristic}
We begin this section by recalling the notions of multiplier ideals and log canonical singularity in zero characteristic for a nonsingular algebraic variety $X$. For details see \cite{Lazarsfeld}*{Ch 9}. 
\begin{defn}\label{multiplier}
	Let $I$ be a nonzero ideal sheaf of $\mathcal{O}_X$. Let $n > 0$ be an integer, the multiplier ideal of $I^n$, denoted by $\CJ(I^n)$, is defined by
	\begin{align*}
		\CJ(I^n) = \phi_*\mathcal{O}_{X'}(K_{X'/X}-[n.F]),
	\end{align*}
	where $-F$ is the divisor associated to $I$, $\phi:X' \rightarrow X$ is the log resolution of $X$. 
\end{defn}

We can measure the singularity of a pair $(\mathcal{O}_X, I)$ with the log canonical threshold.

\begin{defn}\label{lct}
	Let $I$ be an ideal sheaf of $\mathcal{O}_X$. The log canonical threshold of $I$ at a point $x \in X$ is defined to be
	\begin{align*}
		\Lct(I;x) := \operatorname{inf}\{n \in \mathbb{R}; \CJ(I^n)_x \in \frakm_x\}.
	\end{align*}
	and 
	\begin{align*}
		\Lct_X(I) := \operatorname{inf}\{x \in X; \Lct(I;x)\}.
	\end{align*}
	We sometimes write $\Lct(I)$ if the underlying space is clear. Furthermore, for an ideal $I \subseteq R$ we can define $\Lct_R(I)$ similarly.
\end{defn}
Roughly speaking, the higher the log canonical threshold is, the less singular the pair is, as can be seen in Corollary \ref{main_thm_niu}. The same goes for its analogue in positive characteristic, which we will recall below.

Let $R$ be a Noetherian ring with positive characteristic $p$ and denote the set of elements that are not in any minimal prime of $R$ by $R^\circ$. We recall the definition of several notions of F-singularities. Let $F^e : R \rightarrow R$ be the $e$-th Frobenius map that sends $r$ to $r^{p^e}$, and we denote the image of $R$ to be $\Fe R$.
\begin{defn}\label{defn_f_pure}(see \cite{TakagiWanatabe}*{Definition 1.3})
	Let $I$ be an ideal of a reduced $F$-finite ring $R$ such that $I \cap R^\circ \neq \varnothing$. Let $t\in \mathbb{R}_{\geq 0}$.
	\begin{enumerate}	
		\item The pair $(R,I^t)$ is said to be F-pure if for $q=p^e$ large enough, there exists an element $a \in I^{\lfloor t(q-1) \rfloor}$ such that the inclusion $\Fe aR \hookrightarrow \Fe R$ splits over $R$.
		\item The pair $(R,I^t)$ is said to be strongly F-pure if there exists $q=p^e$ large enough, there exists $a \in I^{\lceil tq \rceil}$ such that the inclusion $\Fe aR \hookrightarrow \Fe R$ splits over $R$.
		\item The pair $(R,I^t)$ is said to be strongly F-regular if there exists $q=p^e$ large enough such that for every $c \in R^\circ$, there exists $a \in I^{\lceil tq \rceil}$ such that the inclusion $\Fe(ca) R \hookrightarrow \Fe R$ splits over $R$.
	\end{enumerate}
\end{defn}

We will recall definition of the generalized test ideal, which is an analogue of multiplier ideal in positive characteristic.

\begin{defn}\label{defn_test_ideal}
	Suppose $R$ is an F-finite Noetherian integral domain with $\Char(R) = p > 0$. Then we define the test ideal of $(R,I^t)$, denoted by $\tau(I^t)$, to be the unique smallest nonzero ideal $J$ of $R$ such that for all $\phi \in \Hom_R(\Fe R, R)$ and all $q=p^e$, $e>0$, $\phi(\Fe I^{\lceil t(q-1) \rceil}J) \subseteq J$.
\end{defn}

\begin{remark}\label{gen_test_ideal}
	When $R$ is regular F-finite, e.g. when $R$ is a polynomial ring over an F-finite field, the test ideal defined above has a simpler presentation. First we define $I^{[1/p^e]}$ to be the smallest ideal $J$ such that $I \subseteq J^{[p^e]}$. Then it was shown in \cite{BlickleMustataSmith} that the ideal
	\begin{align*}
		\bigcup_{e\geq 0} (I^{\lceil tp^e \rceil})^{[1/p^e]}
	\end{align*}
	coincides with $\tau(I^t)$. Moreover, since the ring is assumed to be Noetherian, it is easy to see that in this situation $\tau(I^t) = (I^{\lceil tp^e \rceil})^{[1/p^e]}$ for $e$ large enough.
\end{remark}

Similar to the situation in zero characterstic, we have the analog notion of the log canonical threshold.
\begin{defn}\label{defn_fpt}
	Let $R$ be a reduced F-finite F-pure ring of positive characteristic.  The F-pure threshold, denote by $\Fpt_R(I)$, is defined to be
	\begin{align*}	
		\Fpt_R(I) = \Sup\{t \in \mathbb{R}_{>0} : (R,I^t) \text{ is F-pure}\}
	\end{align*}
	We sometimes write $\Fpt(I)$ if the underlying ring is clear.
\end{defn}

\begin{remark}
	When $R$ is strongly F-regular,	we can rephrase the above definition of F-pure threshold to be
	\begin{align*}	
		\Fpt_R(I) = \Sup\{t \in \mathbb{R}_{>0} : (R,I^t) \text{ is strongly F-regular}\}.
	\end{align*}
	In addition, if $R$ is a local ring, then $(R,I^t)$ is strongly F-regular if and only if $\tau(I^t) = R$, see \cite{MaPageRG+}*{Remark 2.8}.
\end{remark}

\textbf{Reduction modulo p.} As mentioned before, the F-pure threshold measures the singularity of a pair, which can be considered as an analogue of log canonical threshold in zero characteristic. Using the reduction modulo $p$ method, we can make precise their connection, which we will briefly recall here and we refer the reader to \cite{MustataZhang}*{Ch 2} for more details. Let $R=\CC[x_1,\cdots,x_n]$ and let $I=(f_1,\cdots,f_r)$ be an ideal in $R$. Let $A$ be a finitely generated $\ZZ$-algebra that contains all the coefficients of $f_1,\cdots, f_r$. Let $\frakm_p$ be a maximal ideal of $A$ such that the residue field $\kk(p) := A/\frakm_p$ is of characteristic $p > 0$. Then there exists a finitely generated $A$-algebra, denoted by $\mathfrak{R}_A$, such that $\mathfrak{R}_A \otimes_A \CC \cong R/I$. Let $I_A$ be the ideal in $\mathfrak{A}:=A[x_1,\cdots,x_n]$ generated by $f_1,\cdots,f_r$ mentioned before, and let $I_p$ denote the image of $I_A$ in $\mathfrak{A} \otimes_A \kk(p)$. Then $I_p$ is called the reduction modulo $p$ of $I$ and $\operatorname{Spec}(\mathfrak{R}_A \otimes_A \kk(p))$ is called a characteristic $p$ model of $\operatorname{Spec}(R/I)$.

We then recall here the known relations between the log canonical threshold of an ideal $I$ and the F-pure threshold of its reduction modulo $p$.
\begin{theorem}\label{fpt<lct}\cite{HaraYoshida}*{Proposition 3.8}
	Using the notations above, after possibly replacing $A$ by its localization $A_a$ where $a\in A$ is nonzero, for any maximal ideal $\frakm_p$ of $A$ with $\Char(\kk(p)) = p$, we have $\Lct(I) \geq \Fpt(I_p)$.
\end{theorem}
A uniform bound of $\Lct(I) - \Fpt(I_p)$ was obatined by Musta\c{t}\u{a} and Zhang.
\begin{theorem}\cite{MustataZhang}*{Theorem A (i)}\label{diff_fpt_lct}
	Using the same notations as in Theorem \ref{fpt<lct}, there exists a constant $C > 0$ such that for any maximal ideal $\frakm_p$ of $A$ with $\Char(\kk(p))=p$, we have
	\begin{align*}
		\Lct(I) - \Fpt(I_p) < \dfrac{C}{p}
	\end{align*}
\end{theorem}

We also recall an important property of the F-pure thresholds of powers of ideals, which will play a key role in the proof of the main theorem.

\begin{lemma}\cite{TakagiWanatabe}*{Proposition 2.2 (2)}\label{fpt=cfpt}
	Let $R$ be a reduced F-finite F-pure ring of positive characteristic. Then for any ideal $I \subseteq R$ and any positive integer $n$ we have $\Fpt(I) = n\Fpt(I^n)$.
\end{lemma}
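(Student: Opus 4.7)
The plan is to characterize $\Fpt(I)$ through a single splitting-exponent function and then to compare that function directly with its analogue for $I^n$ by means of the tautology $(I^n)^r = I^{nr}$.

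For each $e \geq 1$ I would set
$$b^I(p^e) := \max\{r \in \mathbb{Z}_{\geq 0} : \exists\, a \in I^r \text{ with } \Fe aR \hookrightarrow \Fe R \text{ splitting over } R\}.$$
Unpacking Definitions \ref{defn_f_pure}(1) and \ref{defn_fpt}, the pair $(R, I^t)$ is F-pure precisely when $b^I(p^e) \geq \lfloor t(p^e - 1) \rfloor$ for all sufficiently large $e$. Taking the supremum over such $t$ and observing that $\lfloor t(p^e - 1) \rfloor / p^e \to t$, I would deduce
$$\Fpt(I) = \liminf_{e \to \infty} \frac{b^I(p^e)}{p^e}.$$

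For the comparison step, the identity $(I^n)^r = I^{nr}$ and the fact that the splitting of $\Fe aR \hookrightarrow \Fe R$ is a property of $a$ alone (independent of the ambient ideal) yield the clean identity
$$b^{I^n}(p^e) = \left\lfloor \frac{b^I(p^e)}{n}\right\rfloor.$$
Dividing by $p^e$, the floor contributes an error of order $1/p^e$ that vanishes in the limit, so
$$\Fpt(I^n) = \liminf_{e \to \infty} \frac{1}{p^e}\left\lfloor \frac{b^I(p^e)}{n}\right\rfloor = \frac{1}{n}\liminf_{e \to \infty} \frac{b^I(p^e)}{p^e} = \frac{\Fpt(I)}{n},$$
which is the required equality $\Fpt(I) = n\Fpt(I^n)$.

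The main subtlety is the $\liminf$ formula for $\Fpt$: Definition \ref{defn_fpt} is a supremum of F-pure values of $t$, and each such $t$ requires a splitting element to exist for \emph{all} sufficiently large $e$, so one must argue both that any $t$ strictly below $\liminf b^I(p^e)/p^e$ is F-pure (which is immediate from the definition of $\liminf$) and that no $t$ strictly above this $\liminf$ can be F-pure (so that the supremum really equals the $\liminf$). Once this bookkeeping is in place, the passage from $I$ to $I^n$ is entirely formal, since the equality $b^{I^n}(p^e) = \lfloor b^I(p^e)/n \rfloor$ makes no use of F-purity at all.
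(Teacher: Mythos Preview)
The paper does not give its own proof of this lemma: it is simply quoted from Takagi--Watanabe \cite[Proposition 2.2(2)]{TakagiWanatabe}, so there is no in-paper argument to compare against. Your proposal is correct and is in fact essentially the argument Takagi and Watanabe use: one introduces the auxiliary function $b^I(p^e)$ (their $\nu$-invariant), rewrites $\Fpt(I)$ as the limit of $b^I(p^e)/p^e$, and then the identity $(I^n)^r=I^{nr}$ forces $b^{I^n}(p^e)=\lfloor b^I(p^e)/n\rfloor$, from which the result is immediate after dividing by $p^e$.

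One small remark on the ``main subtlety'' you flag. Under the standing hypothesis that $R$ is F-pure, the sequence $b^I(p^e)/p^e$ is in fact non-decreasing: if $\phi\in\Hom_R(\Fe R,R)$ satisfies $\phi(\Fe a)=1$ with $a\in I^r$, and $\psi\in\Hom_R(F_*R,R)$ is a Frobenius splitting of $R$, then $\phi\circ\Fe\psi$ sends $F^{e+1}_*a^p$ to $1$ and $a^p\in I^{pr}$, so $b^I(p^{e+1})\ge p\,b^I(p^e)$. Hence the $\liminf$ is actually a limit, and the bookkeeping you mention about matching the supremum in Definition~\ref{defn_fpt} to the $\liminf$ becomes straightforward.
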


\section{Frobenius powers}
In this section we assume the ring $R$ is of positive characteristic. We recall the constructions of the Frobenius powers of ideals from \cite{HernandezTexeiraWitt}. For any $e \geq 0$, we define the $p^e$-Frobenius power of $I$ to be
\begin{align*}
	I^{[p^e]} = \{f^{p^e}: f \in I\}.
\end{align*}
Moreover, we define the $[1/p^e]$-Frobenius power of $I^{[1/p^e]}$ to be the smallest ideal $J$ such that $I \subseteq J^{[p^e]}$. Then we can generalize these definitions as follows.

\begin{defn}\label{real_frob_power}
Let $k \in \mathbb{N}$ and let $$k = k_0 + k_1 p + \cdots + k_n p^n$$ be the base $p$ expansion of $k$ for a prime $p$, then we define
$$I^{[k]} := I^{k_0}I^{k_1[p]}\cdots I^{k_n[p^n]}.$$

Moreover, let $e \geq 0$. Then we define the rational power $[k/p^e]$ of $I$ to be
$$I^{[k/p^e]} := (I^{[k]})^{[1/p^e]}.$$

Finally, for $t \in \mathbb{R}_{\geq 0}$ we define the $[t]$-real Frobenius power of $I$ to be
$$I^{[t]} = \bigcup_{k \geq 0} I^{[t_k]}$$ where $t_k \searrow t$. For Noetherian ring this means $I^{[t]} = I^{[t_k]}$ for $k$ large enough.

\begin{remark}\label{p_seq_t}
	As noted in \cite{HernandezTexeiraWitt}, in practice we usually take the sequence $\{t_k = \lceil tp^k \rceil / p^k\}$.
\end{remark}
\end{defn}

We recall the definition of the least critical exponent of an ideal, which serves as an analog of the F-pure thresholds.

\begin{defn}\label{defn_lce}
	The least critical exponent of an ideal $I$ is defined to be
	$$\Lce_R(I) = \Min\{t \in \mathbb{R}_{>0}; I^{[t]} \neq R\} = \Sup\{t \in \mathbb{R}_{>0}; I^{[t]} = R\}.$$
\end{defn}

Note that the minimum can be achieved for both the F-pure threshold and the least critical exponent of $I$ due to the right constancy of $\tau(I^t)$ and $I^{[t]}$, see 
\cite{BlickleMustataSmith}*{Proposition 2.14} and \cite{HernandezTexeiraWitt}*{Lemma 3.10}.

\begin{remark}\label{alg_closed}
	By \cite{BlickleMustataSmith}*{Remark 2.18} we see that if $\kk \subseteq \mathbb{K}$ is an extension of perfect field and we let $R=\kk[x_1,\cdots,x_n]$ and $R' = \mathbb{K}[x_1,\cdots,x_n]$, then we have $I^{[t]}R' = (IR')^{[t]}$. 
\end{remark}
We end this section by recording two important theorems relating the log canonical threshold, F-pure threshold and least critical exponent.

\begin{theorem}(See \cite{HernandezTexeiraWitt}*{Theorem C})\label{htwthm}
	Let $I$ be an ideal of a polynomial ring $R$ over $\CC$, then in general we have $\Lce(I_p) \leq \Fpt(I_p) \leq \Lct(I)$ for any prime $p$. If in addition $\Lct(I) \leq 1$, then we have 
	$$\lim_{p \rightarrow \infty}\Lce(I_p) = \lim_{p \rightarrow \infty}\Fpt(I_p) = \Lct(I).$$
\end{theorem}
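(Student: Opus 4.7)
My plan is to prove the three claims in turn, with the limit statement requiring the most care.

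For the inequality $\Lce(I_p) \leq \Fpt(I_p)$, the key observation is the containment $I^{[k]} \subseteq I^k$ for every $k \in \mathbb{N}$. Writing $k = k_0 + k_1 p + \cdots + k_n p^n$ in base $p$ and using $I^{[p^j]} \subseteq I^{p^j}$ (which holds since $f^{p^j} \in I^{p^j}$ for $f \in I$), I obtain
$$I^{[k]} = \prod_j \bigl(I^{[p^j]}\bigr)^{k_j} \subseteq \prod_j \bigl(I^{p^j}\bigr)^{k_j} = I^k.$$
Applying the $[1/p^e]$-root operation (which is inclusion-preserving) to the descriptions $\tau(I^t) = (I^{\lceil tp^e \rceil})^{[1/p^e]}$ and $I^{[t]} = (I^{[\lceil tp^e \rceil]})^{[1/p^e]}$ for $e \gg 0$ yields $I_p^{[t]} \subseteq \tau(I_p^t)$. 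Hence $I_p^{[t]} = R_p$ forces $\tau(I_p^t) = R_p$, so $(R_p, I_p^t)$ is strongly F-regular and in particular F-pure; taking $\sup$ yields the first inequality. The bound $\Fpt(I_p) \leq \Lct(I)$ is precisely Theorem~\ref{fpt<lct}.

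For the limit statement, the Musta\c{t}\u{a}-Zhang theorem (Theorem~\ref{diff_fpt_lct}) gives $\lim_p \Fpt(I_p) = \Lct(I)$, which together with the two inequalities above immediately yields $\limsup_p \Lce(I_p) \leq \Lct(I)$. What remains is the matching lower bound $\liminf_p \Lce(I_p) \geq \Lct(I)$. To establish it, I would fix $t$ with $t < \Lct(I)$ and $t < 1$ (possible by the standing assumption $\Lct(I) \leq 1$) and show $I_p^{[t]} = R_p$ for $p \gg 0$. Setting $n_e = \lceil tp^e \rceil$, the condition $t < 1$ forces $n_e < p^e$ for $e$ large, so the base-$p$ digits of $n_e$ all lie in $\{0,\dots,p-1\}$.

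The main obstacle is upgrading the standard reduction-mod-$p$ comparison between the characteristic zero multiplier ideal $\mathcal{J}(I^t)$ and the test ideal $\tau(I_p^t)$ into a statement about the Frobenius power $I_p^{[n_e]}$ rather than the ordinary power $I_p^{n_e}$. I would attack this by analyzing how each base-$p$ digit of $n_e$ contributes to the gap between $I^{n_e}$ and $I^{[n_e]}$ through multinomial-type terms which, after taking $[1/p^e]$-roots, become negligible as $p \to \infty$. The combinatorial bookkeeping of these base-$p$ contributions interacting with the log resolution data reduced to characteristic $p$ is the technical heart of the argument; once it is carried out, one concludes $\Lce(I_p) \geq t$ for $p \gg 0$, and letting $t \nearrow \Lct(I)$ completes the proof.
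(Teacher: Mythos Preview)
The paper does not prove this statement; it is quoted as Theorem~C of \cite{HernandezTexeiraWitt} and used as a black box. So there is no in-paper proof to compare against, and I will just assess your argument on its own.

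Your treatment of the two inequalities is fine. The containment $I^{[k]}\subseteq I^{k}$ is correct, the $[1/p^e]$-root is order-preserving, and comparing the descriptions in Remark~\ref{gen_test_ideal} and Definition~\ref{real_frob_power} gives $I_p^{[t]}\subseteq \tau(I_p^{t})$, hence $\Lce(I_p)\leq \Fpt(I_p)$. The bound $\Fpt(I_p)\leq \Lct(I)$ is exactly Theorem~\ref{fpt<lct}.

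The gap is in the limit statement. You correctly isolate the missing inequality $\liminf_p \Lce(I_p)\geq \Lct(I)$, but your proposed attack---tracking base-$p$ digits against log-resolution data to control the discrepancy between $I^{[n_e]}$ and $I^{n_e}$---is only a vague program, and you explicitly defer ``the technical heart of the argument.'' No actual proof is given. More importantly, you are working much harder than necessary: the companion result you already have available, Theorem~\ref{fpt<lce+}(2), closes the gap immediately. If $I_p$ is generated by $r$ elements (with $r$ independent of $p$), then either $\Lce(I_p)=1$, in which case $\Lce(I_p)=\Fpt(I_p)=\Lct(I)=1$ by the chain $\Lce(I_p)\leq \Fpt(I_p)\leq \Lct(I)\leq 1$, or else
\[
\Fpt(I_p)\;\leq\;\Lce(I_p)+\frac{r-1}{p-1}.
\]
Combining this with $\Lce(I_p)\leq \Fpt(I_p)$ and with Theorem~\ref{diff_fpt_lct} squeezes $\Lce(I_p)$ to $\Lct(I)$ as $p\to\infty$. (The hypothesis $\Lct(I)\leq 1$ is genuinely needed here: for a proper ideal one always has $I_p^{[1]}=I_p\neq R_p$, so $\Lce(I_p)\leq 1$ unconditionally, and the limit cannot equal $\Lct(I)$ if the latter exceeds~$1$.) This is presumably close to how \cite{HernandezTexeiraWitt} itself argues, since both Theorem~\ref{htwthm} and Theorem~\ref{fpt<lce+} come from that source.
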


\begin{theorem}(See \cite{HernandezTexeiraWitt}*{Proposition 4.16}\label{fpt<lce+}
	Suppose $I$ is a nonzero proper ideal of a polynomial ring $R$ over a F-finite field of positive characteristic. Then
	\begin{enumerate}
		\item If $0 \neq f \in I$, then we have $$0 \leq \Fpt(f) = \Lce(f) \leq \Lce(I) \leq \Min\{1,\Fpt(I)\}.$$
		\item If $\Lce(I) \neq 1$ and if $I$ can be generated by $r$ elements, then $$\Fpt(I) \leq \Lce(I)+\dfrac{r-1}{p-1}.$$ 
	\end{enumerate}
\end{theorem}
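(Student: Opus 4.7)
The plan is to treat the two parts separately: part (1) is a chain of ideal containments, while part (2) rests on a pigeonhole inclusion relating ordinary and generalized Frobenius powers.

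For part (1), nonnegativity of $\Fpt(f)$ is immediate. For $\Fpt(f) = \Lce(f)$, the idea is that the generalized Frobenius powers of a principal ideal collapse to ordinary powers: since $(f)^{[p^i]} = (f^{p^i})$, Definition \ref{real_frob_power} yields $(f)^{[k]} = (f^k)$ for every integer $k$, so $(f)^{[t]} = (f^{\lceil t p^e \rceil})^{[1/p^e]}$ for $e$ large, which coincides with $\tau(f^t)$ by Remark \ref{gen_test_ideal}; comparing the definitions of $\Lce$ and $\Fpt$ gives equality. The containment $(f)^{[t]} \subseteq I^{[t]}$ (from $(f) \subseteq I$) delivers $\Lce(f) \leq \Lce(I)$. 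Next, $\Lce(I) \leq 1$ holds because $I^{[1]} = I$ is proper. Finally, for $\Lce(I) \leq \Fpt(I)$ I would use the general containment $I^{[k]} \subseteq I^k$ (which follows from $(I^{[p^i]})^{k_i} \subseteq I^{k_i p^i}$); taking $[1/p^e]$-th Frobenius powers gives $I^{[t]} \subseteq \tau(I^t)$, so $I^{[t]} = R$ forces $\tau(I^t) = R$, i.e.\ strong $F$-regularity (hence $F$-purity) at parameter $t$.

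For part (2), let $I$ be generated by $r$ elements $f_1, \ldots, f_r$ and let $k = \sum_i k_i p^i$ be the base-$p$ expansion of $k$. The core inclusion I would establish is
$$
I^{\,k + (r-1)\sum_{k_i \neq 0}(p^i - 1)} \subseteq I^{[k]}.
$$
For each nonzero digit $i$, this follows from the pigeonhole containment $I^{k_i p^i + (r-1)(p^i - 1)} \subseteq (I^{[p^i]})^{k_i}$: any monomial $f_1^{a_1} \cdots f_r^{a_r}$ with $\sum_j a_j \geq k_i p^i + (r-1)(p^i - 1)$ satisfies $\sum_j \lfloor a_j / p^i \rfloor \geq k_i$ (the worst case occurring when $r-1$ of the $a_j$'s equal $p^i - 1$), so it factors as a product of $k_i$ elements of $I^{[p^i]}$. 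Multiplying these containments over nonzero digits gives the displayed inclusion.

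To finish, set $t = \Lce(I)$. The hypothesis $\Lce(I) \neq 1$ together with part (1) gives $t < 1$, so $k := \lceil tp^e \rceil < p^e$ for all large $e$ and hence $\sum_{k_i \neq 0}(p^i - 1) \leq (p^e - 1)/(p - 1)$. Writing $N_e := k + (r-1)(p^e - 1)/(p - 1)$, the displayed inclusion becomes $I^{N_e} \subseteq I^{[\lceil tp^e \rceil]}$. Taking $[1/p^e]$-th Frobenius powers and using right constancy $I^{[\lceil tp^e \rceil / p^e]} = I^{[t]}$ for $e$ large, we get $(I^{N_e})^{[1/p^e]} \subseteq I^{[t]}$, a proper ideal. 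Working locally at the relevant maximal ideal this gives $I^{N_e} \subseteq \frakm^{[p^e]}$, and invoking the standard characterization $\Fpt(I) = \lim_{e \to \infty} \nu_e(I)/p^e$ with $\nu_e(I) = \Max\{m : I^m \not\subseteq \frakm^{[p^e]}\}$ we conclude $\nu_e(I) < N_e$, whence $\Fpt(I) \leq \lim_e N_e/p^e = \Lce(I) + (r-1)/(p-1)$. The most delicate step is the pigeonhole count for the key containment; the hypothesis $\Lce(I) \neq 1$ enters precisely to ensure the digit-sum term $\sum (p^i - 1)$ remains $O(p^e/(p-1))$ rather than growing linearly in $p^e$.
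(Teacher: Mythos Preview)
The paper does not give its own proof of this theorem: it is stated with the citation ``See \cite[Proposition~4.16]{HernandezTexeiraWitt}'' and then used as a black box in the proof of Theorem~\ref{main_thm}. So there is no argument in the paper to compare against.

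That said, your proposal is essentially correct and follows the natural line of argument one would expect in the Hern\'andez--Teixeira--Witt framework. A couple of remarks. In part~(1), your identification $I^{[t]}\subseteq\tau(I^t)$ via $I^{[k]}\subseteq I^k$ and Remark~\ref{gen_test_ideal} tacitly uses that $R$ is regular and $F$-finite; this is the ambient hypothesis throughout the paper (polynomial rings), so it is fine here, but worth making explicit. In part~(2), the step ``multiplying these containments over nonzero digits'' is correct because $I^a\cdot I^b=I^{a+b}$, so $\prod_i I^{N_i}=I^{\sum_i N_i}$ with $N_i=k_ip^i+(r-1)(p^i-1)$; it would help the reader to spell this out. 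Finally, the passage from $(I^{N_e})^{[1/p^e]}\subseteq I^{[t]}\subsetneq R$ to $I^{N_e}\subseteq\frakm^{[p^e]}$ and the invocation of $\Fpt(I)=\lim_e\nu_e(I)/p^e$ implicitly localize at a maximal ideal where $I^{[t]}$ is proper; again this is standard in the regular local/graded setting the paper works in, but the transition deserves one sentence of justification.
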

\section{Proof of the main theorem}
In this section we will prove the main results of this paper. A crucial step is a relation between the least critical exponents of the ideal $I$ and its linkage $J$. Before we state the result we recall a very useful theorem of the multiplier ideal of powers. 
\begin{theorem}(Variant of Skoda Theorem, see \cite{Lazarsfeld}*{9.6.37})\label{skoda}
	Suppose that $I$ is an ideal sheaf of $\mathcal{O}_X$ in zero characteristic where $X$ is a nonsingular algebraic variety. Suppose that all the associated subvarieties of $I$ has codimension less than or equal to $n$, then 
	\begin{align*}
		\CJ (I^m) \subseteq I
	\end{align*}
	for $m \geq n$.
\end{theorem}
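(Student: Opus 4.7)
The strategy is to reduce to the classical Skoda theorem, which asserts that if $\mathfrak{a}$ is an ideal sheaf on a smooth variety $X$ locally generated by $r$ sections, then $\CJ(\mathfrak{a}^m) = \mathfrak{a} \cdot \CJ(\mathfrak{a}^{m-1})$ for all $m \geq r$, and in particular $\CJ(\mathfrak{a}^m) \subseteq \mathfrak{a}$ for such $m$ (see \cite[Theorem 9.6.26]{Lazarsfeld}). The task is therefore to replace the bound coming from the number of global generators of $I$ with the sharper bound coming from the codimension hypothesis on the associated subvarieties.

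First I would localize: the containment $\CJ(I^m) \subseteq I$ is stalk-local, so it suffices to verify $\CJ(I^m)_x \subseteq I_x$ at each $x \in X$. The hypothesis that every associated subvariety of $I$ has codimension at most $n$ translates, at the level of the stalk, to a bound on the analytic spread $\ell(I_x) \leq n$. I would then produce an $n$-generated reduction $\mathfrak{J} \subseteq I$ in a neighborhood of $x$ by taking $n$ sufficiently general $\kk$-linear combinations of a fixed finite set of generators of $I$; a standard Nakayama and prime-avoidance argument over the finitely many associated primes shows that such a $\mathfrak{J}$ is indeed a reduction, so $\mathfrak{J}$ and $I$ have the same integral closure near $x$.

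Next I would invoke the fact that multiplier ideals depend only on the integral closure of the defining ideal (a standard property from \cite[Ch. 9]{Lazarsfeld}) to conclude $\CJ(I^m) = \CJ(\mathfrak{J}^m)$ near $x$. Applying the classical Skoda theorem to $\mathfrak{J}$, which has at most $n$ local generators, then yields $\CJ(\mathfrak{J}^m) \subseteq \mathfrak{J} \subseteq I$ for every $m \geq n$. Combining these containments and letting $x$ vary gives the theorem.

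The main obstacle is the construction of the local $n$-generated reduction. The general linear combination argument requires an infinite base field, which is automatic in characteristic zero, and a careful accounting of the associated primes to translate the geometric codimension hypothesis into the commutative-algebra inequality $\ell(I_x) \leq n$. This is exactly the refinement that distinguishes this variant from the classical Skoda bound, where one would otherwise use $\dim X$.
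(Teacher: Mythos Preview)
The paper does not supply its own proof of this statement; it is quoted from \cite[9.6.37]{Lazarsfeld} and used as a black box. So there is no proof in the paper to compare against, and I will simply assess your argument on its own merits.

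Your overall plan (localize, pass to a reduction with few generators, invoke the classical Skoda theorem) is the right one, but the step translating the hypothesis into an analytic-spread bound at an \emph{arbitrary} point is incorrect. Consider $I=(y_1,y_2)(x_1,x_2,x_3)$ in $R=\CC[y_1,y_2,x_1,x_2,x_3]$. The associated primes of $R/I$ are $(y_1,y_2)$ and $(x_1,x_2,x_3)$, so $n=3$. However, the fiber cone of $I$ at the origin is the affine cone over the Segre embedding of $\mathbb{P}^1\times\mathbb{P}^2$, which has dimension $4$; hence $\ell(I_\frakm)=4>n$ and $I$ admits no $3$-generated reduction near the origin. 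The theorem is nonetheless true here: one computes $\CJ(I^3)=(y_1,y_2)^2(x_1,x_2,x_3)\subseteq I$, whereas $\CJ(I^2)=(y_1,y_2)\not\subseteq I$. So the conclusion holds even though your mechanism for producing a small reduction breaks down.

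The repair is to localize at the associated primes of $R/I$ rather than at arbitrary closed points. Containment in $I$ is detected at those primes: $f\in I$ if and only if $f\in I_P$ for every $P\in\mathrm{Ass}(R/I)$. At such a $P$ one has $\dim R_P=\Ht(P)\le n$, so the analytic spread of $I_P$ is automatically at most $n$, and an $n$-generated reduction exists in $R_P$. Your Skoda-plus-integral-closure argument then gives $\CJ(I^m)_P=\CJ(I_P^m)\subseteq I_P$ for $m\ge n$ at every associated prime, and hence $\CJ(I^m)\subseteq I$ globally. This is essentially the argument Lazarsfeld indicates.
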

\begin{remark}
Note that we also have a similar property for the test ideal in positive characteristic, see \cite{MaPageRG+}*{2.13.1}.
\end{remark}

With the Skoda theorem we get the following lemma.
\begin{lemma}\label{lct<=1}
	Let $I$ be an equidimensional ideal with $\Ht(I) = c$ in a polynomial ring $R$ over $\CC$ and let $I_p$ be its reduction modulo $p$, then 
	\begin{align*}
		\Lce(I_p^c) \leq \Fpt(I_p^c) \leq \Lct(I^c) \leq 1.
	\end{align*}
\end{lemma}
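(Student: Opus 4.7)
\emph{Proof plan.} The chain splits naturally into three separate claims, and I would handle each independently. The leftmost bound $\Lce(I_p^c) \leq \Fpt(I_p^c)$ and the middle bound $\Fpt(I_p^c) \leq \Lct(I^c)$ are essentially immediate from previously recorded general facts; only the rightmost bound $\Lct(I^c) \leq 1$ uses the equidimensional, height-$c$ hypothesis on $I$ in a nontrivial way, via Skoda.

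For the leftmost inequality, I would apply Theorem \ref{fpt<lce+}(1) to the ideal $I_p^c$ (which is a nonzero proper ideal once $p$ is large enough that $I_p \neq 0$), obtaining $\Lce(I_p^c) \leq \Min\{1,\Fpt(I_p^c)\} \leq \Fpt(I_p^c)$. For the middle inequality, reduction modulo $p$ commutes with taking ordinary powers so that $(I^c)_p = I_p^c$, and Theorem \ref{fpt<lct} applied to the single ideal $I^c$ in characteristic zero yields $\Fpt(I_p^c) = \Fpt((I^c)_p) \leq \Lct(I^c)$.

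For the rightmost and main inequality $\Lct(I^c) \leq 1$, I would appeal to the variant of Skoda recalled in Theorem \ref{skoda}. The equidimensionality hypothesis says that every associated subvariety of $I$ has codimension exactly $c$, so the hypothesis of Skoda is satisfied with $n = c$, giving $\CJ(I^c) \subseteq I$. Since $I$ is a proper ideal it is contained in $\frakm$, hence $\CJ((I^c)^1) = \CJ(I^c) \subseteq \frakm$; unwinding the definition of $\Lct(I^c)$ as the infimum of those $t$ for which $\CJ((I^c)^t) \subseteq \frakm$, we conclude that $\Lct(I^c) \leq 1$.

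The main obstacle, such as it is, lies in applying Skoda with the right exponent: one must use equidimensionality (rather than merely $\Ht(I) = c$) to ensure that \emph{every} associated subvariety has codimension exactly $c$, which is precisely the exponent appearing on the left of the chain via $I^c$. The other two inequalities are little more than citations of the previously quoted theorems.
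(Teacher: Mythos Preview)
Your argument is correct and matches the paper's proof, which simply cites the definition of $\Lct$, Theorem~\ref{htwthm}, and Theorem~\ref{skoda}. The only cosmetic difference is that the paper obtains the first two inequalities in one stroke from Theorem~\ref{htwthm} (which already packages $\Lce(I_p)\leq\Fpt(I_p)\leq\Lct(I)$), whereas you cite Theorem~\ref{fpt<lce+}(1) and Theorem~\ref{fpt<lct} separately; your treatment of the rightmost bound via Skoda and the definition of $\Lct$ is exactly what the paper has in mind.
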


\begin{proof}
	This follows immediately from the definition of $\Lct(I)$, Theorem \ref{htwthm} and Theorem \ref{skoda}.
\end{proof}

We will make use of the below elementary property of multinomial in the proof of the main theorem as well.

\begin{theorem}[Multinomial Theorem]\label{multinomial_thm}
	Let $k,m\in \mathbb{N}$. Then we can expand the $k$-th power of the sum of $m$ terms as 
	$$(y_1+ \cdots +y_m)^k = \sum_{\gamma_1+ \cdots +\gamma_m =k}\binom{k}{\gamma_1,...,\gamma_m}y_1^{\gamma_1}\cdots y_m^{\gamma_m},$$
	where $$\binom{k}{\gamma_1,\cdots ,\gamma_m} = \dfrac{k!}{\gamma_1! \cdots \gamma_m!}.$$
\end{theorem}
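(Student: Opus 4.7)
The plan is to prove the multinomial theorem by induction on the number of summands $m$, using the binomial theorem as the base case. For $m=1$ the statement is trivial, and for $m=2$ it is exactly the binomial theorem, which I would take as known. For the inductive step, assuming the identity holds with $m$ summands, I would rewrite
$$(y_1+\cdots+y_{m+1})^k = \bigl((y_1+\cdots+y_m)+y_{m+1}\bigr)^k$$
and apply the binomial theorem to expand in powers of $y_{m+1}$.

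After expansion the expression becomes $\sum_{j=0}^{k}\binom{k}{j}(y_1+\cdots+y_m)^{k-j}\,y_{m+1}^{j}$. Applying the inductive hypothesis to each factor $(y_1+\cdots+y_m)^{k-j}$ produces a double sum that, after renaming the outer index $j=:\gamma_{m+1}$, is indexed by tuples $(\gamma_1,\ldots,\gamma_{m+1})$ with $\gamma_1+\cdots+\gamma_{m+1}=k$. The key identity to verify is the factorial recursion
$$\binom{k}{j}\binom{k-j}{\gamma_1,\ldots,\gamma_m} = \binom{k}{\gamma_1,\ldots,\gamma_m,j},$$
which follows at once by cancelling factorials from the definition.

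Alternatively, a purely combinatorial argument delivers the result in one step: the coefficient of $y_1^{\gamma_1}\cdots y_m^{\gamma_m}$ in $(y_1+\cdots+y_m)^k$ counts the ways of choosing, among the $k$ ordered factors, which $\gamma_i$ of them contribute $y_i$, and this count is exactly $k!/(\gamma_1!\cdots\gamma_m!)$. I do not anticipate a genuine obstacle with either route; the only delicate point in the inductive argument is the bookkeeping in re-indexing the double sum, which I would handle by fixing $\gamma_{m+1}$ and matching the coefficient of each monomial on both sides.
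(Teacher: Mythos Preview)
Your proposal is correct and entirely standard; both the inductive route via the binomial theorem and the combinatorial count are valid proofs of the multinomial theorem, and the factorial recursion you identify is exactly the right identity to check. There is nothing to compare against, however: the paper states Theorem~\ref{multinomial_thm} as a well-known elementary fact without proof, invoking it only to expand $g_j^{b_ip^i}$ in the argument for Proposition~\ref{lce_linkage}.
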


Next, we prove the crucial intermediate step (Theorem \ref{main_prop}).
\begin{prop}\label{lce_linkage}
	Assume $\kk$ is a perfect field of positive characteristic $p$. Let $I = (f_1,\cdots,f_r)$ be an ideal in $R = \kk[x_1,\cdots, x_n]$ of height $c$ and let $J$ be its generic linkage. Recall also that $L=(g_1,\cdots, g_c)$ where $g_i:= \sum u_{ij}f_j$.  Then we have
	$$\Lce_R(I^c) = \Lce_S(L^c) \leq \Lce_R(J^c).$$
\end{prop}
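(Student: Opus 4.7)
The proposition has two claims. For the inequality $\Lce_S(L^c) \leq \Lce_S(J^c)$, observe that $L \cdot I \subseteq L$ implies $L \subseteq (L:I) = J$, hence $L^c \subseteq J^c$. Monotonicity of $\Lce$ under ideal inclusion---if $\mathfrak{a} \subseteq \mathfrak{b}$, then $\mathfrak{a}^{[t]} \subseteq \mathfrak{b}^{[t]}$ for all $t$, which forces $\Lce(\mathfrak{a}) \leq \Lce(\mathfrak{b})$---then gives the claim.

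For the equality $\Lce_R(I^c) = \Lce_S(L^c)$, the easier direction is analogous: each $g_i = \sum_j u_{ij} f_j$ lies in $IS$, so $L \subseteq IS$ and $L^c \subseteq I^c S$. Combining monotonicity with the fact that the faithfully flat polynomial extension $R \hookrightarrow S$ preserves Frobenius powers, $(I^c S)^{[t]} = (I^c)^{[t]} S$, yields $\Lce_S(L^c) \leq \Lce_S(I^c S) = \Lce_R(I^c)$.

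The main substance is the reverse inequality $\Lce_R(I^c) \leq \Lce_S(L^c)$, which requires more than a size comparison because $L^c \subsetneq I^c S$ in general. My plan is to use the multinomial theorem (Theorem~\ref{multinomial_thm}) to show that, although $L^c$ is a proper subideal of $I^c S$, the generators $f^{\mathbf{b}}$ of $I^c$ are ``encoded'' in generators of $L^c$ as $u$-coefficients. Concretely, for each row index $i_0$,
\[
g_{i_0}^{\,c} \;=\; \Bigl(\sum_{j} u_{i_0 j} f_j\Bigr)^{c} \;=\; \sum_{|\mathbf{b}| = c} \binom{c}{\mathbf{b}} \Bigl(\prod_{j} u_{i_0 j}^{b_j}\Bigr) f^{\mathbf{b}} \;\in\; L^c,
\]
and the extremal multinomial coefficients $\binom{c}{(0,\ldots,c,\ldots,0)} = 1$ remain units in every characteristic. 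Raising to a $p^e$-th power and applying the freshman's dream $g_i^{p^e} = \sum_j u_{ij}^{p^e} f_j^{p^e}$ available in characteristic~$p$, the same kind of expansion persists for $(L^c)^{[p^e]}$. Using this, any element witnessing $(I^c)^{[N]} \not\subseteq \mathfrak{m}^{[p^e]}$ (for $\mathfrak{m} \subset R$ maximal) is lifted, via the expansion with unit-leading $u$-coefficient, to an element of $(L^c)^{[N]}$ lying outside $\mathfrak{n}^{[p^e]}$ for a corresponding maximal $\mathfrak{n} \subset S$. Hence $(L^c)^{[t]} = S$ whenever $(I^c)^{[t]} = R$, giving the desired inequality.

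The main obstacle is the careful tracking of the multinomial expansion in characteristic $p$: multinomial coefficients that vanish mod $p$ contribute zero and could in principle cancel the useful parts of the expansion. The saving grace is that the extremal coefficients $\binom{c}{(c,0,\ldots,0)}, \binom{c}{(0,c,0,\ldots,0)}, \ldots$ are identically $1$, so the pure-power monomial generators $f_j^c$ of $(I^c)^{[N]}$ always survive in the corresponding expansion of $(L^c)^{[N]}$. The auxiliary bound $\Lce_R(I^c) \leq 1$ from Lemma~\ref{lct<=1} (derived from Skoda's theorem) further ensures that the relevant parameters $t$ stay within the range where this expansion is controlled.
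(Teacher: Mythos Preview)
Your proposal identifies the right overall shape and the right obstacle, but your resolution of that obstacle does not close the gap. You argue that because the extremal multinomial coefficients $\binom{c}{(0,\ldots,c,\ldots,0)}=1$ survive in any characteristic, the pure powers $f_j^{\,c}$ are always detected as $u$-coefficients in $g_{i_0}^{\,c}$. But $I^c$ is generated by \emph{all} monomials $f^{\mathbf b}=f_1^{b_1}\cdots f_r^{b_r}$ with $|\mathbf b|=c$, not only by the $f_j^{\,c}$; recovering only the pure powers shows at best $\Lce_R\bigl((f_1^{\,c},\ldots,f_r^{\,c})\bigr)\le\Lce_S(L^c)$, which is strictly weaker than the claimed $\Lce_R(I^c)\le\Lce_S(L^c)$. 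The witness $f\in(I^c)^{[N]}\setminus\mathfrak m^{[p^e]}$ you want to lift need not be built from pure powers, so your ``saving grace'' does not apply to it. A second, related issue: to compare $(I^c)^{[t]}$ with $(L^c)^{[t]}$ you must handle the integer powers $(I^c)^{[N]}=\prod_i(I^c)^{b_i[p^i]}$ for the base-$p$ digits $b_i$ of $N$, not just $N=p^e$; your sketch does not say how the expansion behaves for such products.

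The paper's proof repairs both points simultaneously. Writing the relevant exponent in base $p$ forces each digit $b_i\le p-1$; then in the expansion
\[
g_j^{\,b_ip^i}=\Bigl(\sum_l(u_{jl}f_l)^{p^i}\Bigr)^{b_i}=\sum_{\gamma_1+\cdots+\gamma_r=b_i}\binom{b_i}{\gamma_1,\ldots,\gamma_r}\prod_l(u_{jl}f_l)^{p^i\gamma_l},
\]
\emph{every} multinomial coefficient is a unit mod $p$, not just the extremal ones. Crucially, the paper then takes the product $g_1^{\,b_ip^i}\cdots g_c^{\,b_ip^i}$ using \emph{all $c$ rows} of $(u_{ij})$ rather than a single $g_{i_0}$; multiplying the expansions realizes an arbitrary generator $f_1^{v_1}\cdots f_r^{v_r}$ of $(I^c)^{b_i[p^i]}$ (with $\sum_l v_l=cb_ip^i$ and $p^i\mid v_l$) as a term with unit $u$-coefficient, and a projection onto that $u$-monomial finishes the contradiction. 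Finally, the bound $\Lce_R(I^c)\le 1$ you invoke does not come from Lemma~\ref{lct<=1}, which is a characteristic-zero statement about reductions; it is the general inequality $\Lce(\mathfrak a)\le 1$ for proper ideals recorded in Theorem~\ref{fpt<lce+}(1).
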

\begin{proof}
	By Remark \ref{alg_closed}, without loss of generality, we may assume $\kk$ is algebraically closed. Since $L^c \subseteq J^c$ we have $\Lce_S(L^c) \leq \Lce_S(J^c)$, it remains to prove the first equality in the statement. We first observe that $\Lce_S(I^c) = \Lce_R(I^c)$, and since we have $L^c \subseteq I^c$ in $S$, we have $\Lce_S(L^c) \leq \Lce_R(I^c)$.

	We will prove the equality by contradiction, so suppose $\Lce_S(L^c) < \Lce_R(I^c)$ and suppose that $\Lce_R(I^c) = t$, which means there exists $\epsilon>0$ such that $(L^c)^{[t-\epsilon]} \neq S$, 

	As noted in remark \ref{p_seq_t},  we can take $t_k = \lceil tp^k \rceil/p^k$ for $k$ large enough such that $(I^c)^{[t]}=(I^c)^{[t_k]}$ and $(L^c)^{[t]} = (L^c)^{[t_k]}$. We also let $\epsilon = 1/p^e$, in particular we let $e>k$. Since $t\leq 1$, we can write $t_k=1-a/p^k$ for some $a<p^k$. Then the base $p$ expansion of $a$ can be written as $$a=a_0 + pa_1 + \cdots+ p^la_l, \quad l<k<e, \quad a_i \leq p-1$$ let $k' = e-k$ so that we have the base $p$ expansion 
	\begin{align*}
		&p^e-p^{k'}a-1\\
			      &=(p-1)(p^{e-1}+p^{e-2}+ \cdots + p+1) - p^{k'}(a_0+a_1p+ \cdots +a_lp^l)\\
			      &=(p-1) + p(p-1) + \cdots + p^{k'}(p-1-a_0) + \cdots + p^{k'+l}(p-1-a_l)+ \cdots + p^{e-1}(p-1).
	\end{align*}

	Then by definition of the $p$-rational Frobenius powers we can write $(I^c)^{[t_k-\epsilon]}$ as  
	\begin{align}\label{exp_ideal_I}
		\begin{split}
			((I^c)^{[1-a/p^k-1/p^e]}) &= ((I^c)^{[p^e-p^{k'}a-1])})^{[1/p^e]}\\
						  &= (\prod^{e-1}_{i=0}(I^c)^{b_i[p^i]})^{[1/p^e]}, \quad b_i \leq p-1,
		\end{split}
	\end{align}
	where the $b_i$ correspond to the coefficients in the above base $p$ expansion. Similarly we can write 
	\begin{align}\label{exp_ideal_L}
		\begin{split}
			((L^c)^{[1-a/p^k-1/p^e]}) &= ((L^c)^{[p^e-p^{k'}a-1])})^{[1/p^e]}\\
						  &= (\prod^{e-1}_{i=0}(L^c)^{b_i[p^i]})^{[1/p^e]}, \quad b_i \leq p-1.
		\end{split}
	\end{align}
	By our assumption, there exists a maximal ideal $\frakm\subset S$ such that $(L^c)^{[t_k-1/p^e]} \subseteq \frakm$. Since $\kk$ is algebraically closed, we can assume $\frakm = (x_1 - \alpha_1,\cdots ,x_n - \alpha_n, u_{11} - \beta_{11},\cdots, u_{cr} - \beta_{cr})$, and let $\frakn = (x_1 -\alpha_1,\cdots, x_n - \alpha_n)$. Then we have
	\begin{align*}
		\prod^{e-1}_{i=0}(L^c)^{b_i[p^i]} \subseteq \frakm^{[p^e]}.
	\end{align*}
	Pick a generator $g_1^{w_{i1}}\cdots g_c^{w_{ic}}$ of each $(L^c)^{b_i[p^i]}$. By definition of Frobenius power we have $\sum_j w_{ij} = cb_ip^i$ and $p^i$ divides each $w_{ij}$. In particualr we can take $g_1^{b_ip^i}\cdots g_c^{b_ip^i} \in (L^c)^{b_i[p^i]}$.

	Recall that $g_j = u_{j1}f_1 + \cdots + u_{jr}f_r$, and so by Theorem \ref{multinomial_thm} we have the expansion
	\begin{align}\label{exp_gj}
		\begin{split}
		g_j^{b_ip^i} &= (u_{j1}f_1+\cdots+u_{jr}f_r)^{b_ip^i}\\
			     &= ((u_{j1}f_1)^{p^i} + \cdots + (u_{jr}f_r)^{p^i})^{b_i}\\
			     &= \sum_{\gamma_1+\cdots \gamma_r = b_i} \binom{b_i}{\gamma_1,\cdots,\gamma_r}(u_{j1}f_1)^{p^i\gamma_1}\cdots(u_{jr}f_r)^{p^i\gamma_r}
	\end{split}
	\end{align}
	Note that since $b_i \leq p-1$, the multinomial coefficient $\binom{b_i}{\gamma_1,\cdots,\gamma_r}$ is not zero modulo $p$.

	On the other hand, pick an arbitrary generator $f_i$ of $(I^c)^{b_i[p^i]}$, then we can expand $f_i$ as $$f_i = f_1^{v_{i1}}\cdots f_r^{v_{ir}}$$ where $\sum v_{ij} = cb_ip^i$ and $p^i$ divides each $v_{ij}$. Then by the calculation of (\ref{exp_gj}) it is easy to see that $f_i$ will appear in the expansion of $g_1^{b_ip^i}\cdots g_c^{b_ip^i}$, and so any generator $f$ of $(I^c)^{[p^e-p^{k'}a-1]}$ will appear in the expansion of the product $g:=\prod_i g_1^{w_{i1}}\cdots g_c^{w_{ic}}$, which is an element of $(L^c)^{[p^e-p^{k'}a-1]}$. Fix an $f$, we can write $$g = C.\prod u_{ij}^{s_{ij}} f + \text{ other terms},$$ where $C$ is the product of corresponding multinomial coefficients, which as mentioned above is not zero modulo $p$.

	Next, define the $R$-homomorphism $\pi:S \rightarrow S$ such that $\pi(\prod u_{ij}^{s_{ij}}) = \prod u_{ij}^{s_{ij}}$ and $\pi(\prod u_{ij}^{s'_{ij}})=0$ if $s'_{ij} \neq s_{ij}$. Then $\pi(g) = C.\prod u_{ij}^{s_{ij}}f$ and we have $\pi(g) \in \pi(\frakm^{[p^e]}) \subseteq \frakm^{[p^e]}$. But $s_{ij} \leq p^e-1$, so $\prod u_{ij}^{s_{ij}} \notin \frakm^{[p^e]}$, which means $f \in \frakm^{[p^e]}$. However that means $f \in \frakm^{[p^e]} \cap R = \frakn^{[p^e]}$, a contradiction. Therefore $(L^c)^{[t_k-1/p^e]} = S$, and therefore $\Lce_S(L^c) \geq \Lce_R(I^c)$, and thus $\Lce_R(I^c) = \Lce_S(L^c)$, as desired.

\end{proof}
With the above result, we can prove our main result now.
\begin{proof}[Proof of Theorem \ref{main_thm}]
	 Let $P$ be a prime number large enough such that for any $p > P$, $(f_i)_p$ is nonzero for all $1 \leq i \leq r$ and $I_p$ is equidimensional of height $c$. By construction each $(g_i)_p$ is nonzero as well.

	By Theorem \ref{fpt<lct} and Theorem \ref{diff_fpt_lct}, there exists a constant $\epsilon_1 > 0$ such that for any prime $p$ we have
	\begin{align*}
		\Fpt_R(I_p) \leq \Lct_R(I) \leq \Fpt_R(I_p) + \dfrac{\epsilon_1}{p}.\\
	\end{align*}
	By Lemma \ref{fpt=cfpt} the right hand side is equal to
	\begin{align}\label{cfpt+e1p}
			  c\Fpt_R(I_p^c) + \dfrac{\epsilon_1}{p}.
	\end{align}
	By Lemma \ref{lct<=1}, $\Lce(I_p^c) \leq \Fpt(I_p^c) \leq 1$. Suppose $I^c_p$ is generated by $k$ elements (note that $k$ is independent of $p$), then by Theorem \ref{fpt<lce+} we have the below inequality. Note that if $\Lce_R(I_p^c)=1$ then $\Fpt_R(I_p^c)=1$ so the inequality is trivially true.
	$$\Fpt_R(I_p^c) \leq \Lce_R(I_p^c) + \dfrac{k-1}{p-1},$$
	and therefore by Proposition \ref{lce_linkage} we have for any $p > P$,
	\begin{align*}
		 &(\ref{cfpt+e1p}) \leq c(\Lce_R(I_p^c) + \dfrac{k-1}{p-1}) + \dfrac{\epsilon_1}{p}\\
			  & \leq c(\Lce_S(L_p^c) + \dfrac{k-1}{p-1}) + \dfrac{\epsilon_1}{p}\\
			  & \leq c(\Fpt_S(L_p^c) + \dfrac{k-1}{p-1}) + \dfrac {\epsilon_1}{p}\\
			  & \stackrel{\text{assume }p > k}{\leq} c(\Fpt_S(L_p^c) + \dfrac{k}{p}) + \dfrac {\epsilon_1}{p}\\
			  & = \Fpt_S(L_p) + \dfrac{\epsilon_2}{p},
	\end{align*}
	Since $L_p \subseteq J_p$, we end up with
	$$\Fpt_R(I_p) \leq \Fpt_S(L_p) + \dfrac{\epsilon_2}{p} \leq \Fpt_S(J_p) + \dfrac{\epsilon_2}{p}.$$
	where $\epsilon_2$ is a constant independent of $p > \max\{k, P\}$. Since $L_p \subseteq I_p$ in $S$, we have $\Fpt_S(L_p) \leq \Fpt_S(I_p) = \Fpt_R(I_p)$, and so we get the desired inequalities.
\end{proof}

\begin{proof}[Proof of Corollary \ref{main_thm_niu}]
	Taking limit on both sides of the above inequalities and letting $p$ goes to infinity, by Theorem \ref{fpt<lct} we get 
	\begin{align*}
		\lim_{p \rightarrow \infty}\Fpt_S(L_p) \leq &\lim_{p \rightarrow \infty}\Fpt_R(I_p) \leq \lim_{p \rightarrow \infty}\Fpt_S(L_p) + \dfrac{\epsilon_2}{p} \leq \lim_{p \rightarrow \infty}\Fpt_S(J_p) + \dfrac{\epsilon_2}{p}\\
		&\implies \Lct_R(I) = \Lct_S(L) \leq \Lct_S(J).
	\end{align*}
	which finishes the proof.
\end{proof}

\section{Acknowledgement}
The author would like to thank her advisor Wenliang Zhang for suggesting this problem and useful discussions, and to Linquan Ma for helpful comments on the earlier draft of this paper. The author is partially supported by the NSF grant DMS 1752081.


\end{document}